\DeclareMathAlphabet{\mathpzc}{OT1}{pzc}{m}{it}
\numberwithin{equation}{section}
\newtheorem{thm}[subsection]{Theorem}
\newtheorem*{cor*}{Corollary}
\newtheorem{lemma}[subsection]{Lemma}
\newtheorem{propos}[subsection]{Proposition}
\newtheorem*{thm*}{Theorem}
\newtheorem*{thma*}{Theorem A}
\newtheorem*{thmb*}{Theorem B}
\newtheorem*{thmc*}{Theorem C}
\newcounter{consta}
\newcounter{constb}
\newcounter{constc}
\newcounter{constD}
\def\bbr{\mathbb{R}}
\def\bbh{\mathbb{H}}
\def\bbn{\mathbb{N}}
\def\R{\bbr}
\def\Rfrak{\mathfrak{R}}
\def\Ifrak{\mathfrak I}
\DeclareMathOperator\Mod{Mod}
\DeclareMathOperator\area{area}
\DeclareMathOperator\MC{Mod}
\def\SL{{\rm{SL}}}
\def\GL{{\rm{GL}}}
\def\Sp{{\rm{Sp}}}
\def\stab{{\rm stab}}
\def\vare{\varepsilon}
\def\zg0{Z_{G_\omega}(s)}
\def\zg{Z_G(s)}
\def\be{\begin{equation}}
\def\ee{\end{equation}}
\def\dist{{\rm dist}}
\def\Re{\Rfrak}
\def\Im{\Ifrak}
\def\cM{\mathcal M}
\def\cH{\mathcal H}
\def\cT{\mathcal T}
\def\cN{\mathcal N}
\def\tcM{\widehat{\cM}}
\def\Tc{V}
\def\tTc{\widehat{\Tc}}
\def\stab{{\rm Stab}}
\def\zed{\mathbb Z}
\def\cx{\mathbb C}
\def\reals{\mathbb R}
\def\cross{\times}
\def\noz{m}
\def\Teich{Teichm\"uller }
\def\I{\mbox{i}}
\begin{document}
\title{Benjamini-Schramm convergence of periodic orbits}

\author{Amir Mohammadi}
\address{A.M.\ Mathematics Department, UC San Diego}
\email{ammohammadi@ucsd.edu}
\thanks{A.M.\ acknowledges support by the NSF}

\author{Kasra Rafi}
\thanks{K.R.\ acknowledges support by NSERC Discovery grant, RGPIN 06486}
\address{K.R.\ Department of Mathematics, University of Toronto}
\email{kasra.rafi@math.toronto.edu}

\maketitle

\begin{abstract}
We prove a criterion for Benjamini-Schramm
convergence of periodic orbits of Lie groups. This general observation 
is then applied to homogeneous spaces and the space of translation surfaces.  
\end{abstract}

\section{Benjamini-Schramm convergence}\label{sec: BS conv general}
Let $H\subset\SL_N(\mathbb R)$ be a non-compact semisimple group. 
Even though $H\subset \SL_N(\mathbb R)$, we will write $e$ 
for the identity element in $H$. The notation $I$ (for the identity matrix) 
will only be used when the vector space structure of the space of matrices is relevant.  

Let $\|\;\|$ denote the maximum norm on ${\rm Mat}_N(\bbr)$ with respect to the standard basis, and put
\[
B^H(e,R)=\{h\in H: \|h-I\|< R\text{ and }\|h^{-1}-I\|< R\}.
\]

We also equip $H$ with the right invariant Riemannian metric induced by the killing form, 
and let $B^H_{\rm Rie}(e,r)$ denote the ball of radius 
$r$ centered at the identity with respect to this metric. Then for every $R$, there exists $r>0$ so that 
\[
B^H_{\rm Rie}(e,r)\subset B^H(e,R).
\] 
Let $r(R)$ denote $1/2$ the supremum of all such $r$, then $B^H_{\rm Rie}(e,r(R))\subset B^H(e,R)$ and $r(R)\to \infty$ as $R\to \infty$;  indeed it is not difficult to see that $r(R)\geq C \log R$ where $C>0$ depends on the embedding $H\subset \SL_N(\bbr)$. 

Let $\Delta\subset H$ be a discrete subgroup. 
The injectivity radius of $y\in H/\Delta$ is define as the supremum over all $r>0$ so that 
the map $h\mapsto hy$ is injective on $B^H_{\rm Rie}(e,r)$.
  
Let $\Delta_n\subset H$ be a sequence of lattices in $H$. 
The sequence $\{H/\Delta_n: n\in\bbn\}$ {\em Benjamini-Schramm} converges to $H$ if 
for every $r>0$ we have 
\[
\mu_n\Bigl(\{y\in H/\Delta_n: \text{injectivity radius of $y$}<r\}\Bigr)\to 0\quad\text{as $n\to\infty$}
\]  
where $\mu_n$ denote the $H$-invariant probability measure on $H/\Delta_n$ for every $n$.

Throughout, we assume that $H$ acts continuously on $X$ preserving the measure $\mu$; 
also assume that $\stab_H(x)$ is discrete for every $x\in X$.

An orbit $Hx\subset X$ is called {\em periodic} if $Hx\subset X$ is a closed subset 
and $\stab_H(x)$ is a lattice in $H$.

For a periodic orbit $Hx$, let $\mu_{Hx}$ denote the pushforward 
of the $H$-invariant probability measure of $H/\stab_H(x)$ to $Hx$. 

\begin{propos}\label{prop:axiomatic-BS}
Let $\{Hx_n: n\in\bbn\}$ be a sequence of periodic orbits in $X$ satisfying that
\be\label{eq:weak-conv}
\mu_{Hx_n}\to \mu\quad\text{ as $n\to\infty$.}
\ee
Assume further that for every $R>0$ there exists a continuous function 
$f_R:X\to[0,\infty)$ satisfying the following two properties: 
\begin{enumerate}
\item $f_R(x)>0$ for $\mu$-a.e.\ $x\in X$,
\item if $f_R(x)>0$ for some $x\in X$, then $\stab_H(x)\cap B^H(e,R)=\{e\}$.
\end{enumerate}
Then $H/\stab_H(x_n)$ Benjamini-Schramm converges to $H$.   
\end{propos}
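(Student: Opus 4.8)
The plan is to reduce the statement to the matrix-norm balls $B^H(e,R)$, to which hypotheses (1)--(2) apply directly, and then to push the weak convergence \eqref{eq:weak-conv} through a portmanteau argument. First I would pass from $H/\stab_H(x_n)$ to the orbit $Hx_n$ via the $H$-equivariant homeomorphism $h\stab_H(x_n)\mapsto hx_n$, which carries $\mu_n$ to $\mu_{Hx_n}$ and sends the stabilizer of $h\stab_H(x_n)$ to $\stab_H(hx_n)=h\stab_H(x_n)h^{-1}$. Under this identification it suffices to prove that for every $r>0$,
\[
\mu_{Hx_n}\bigl(\{y\in Hx_n:\ \text{injectivity radius of $y$}<r\}\bigr)\to 0 .
\]

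The key step is a geometric translation: for fixed $r>0$ I would produce $R=R(r)$ so that whenever the injectivity radius at $y=hx_n$ is $<r$, the conjugated lattice meets a matrix ball, i.e.\ $\stab_H(y)\cap B^H(e,R)\neq\{e\}$. Indeed, injectivity radius $<r$ means $g\mapsto gy$ fails to be injective on $B^H_{\rm Rie}(e,r)$, so there are $g_1\neq g_2\in B^H_{\rm Rie}(e,r)$ with $\gamma:=g_2^{-1}g_1\in\stab_H(y)\setminus\{e\}$. Choosing $R_0$ with $r(R_0)\geq r$ (possible since $r(R)\to\infty$) gives $B^H_{\rm Rie}(e,r)\subseteq B^H(e,R_0)$, hence $g_1,g_2\in B^H(e,R_0)$. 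Writing $\gamma-I=g_2^{-1}(g_1-g_2)$ and $\gamma^{-1}-I=g_1^{-1}(g_2-g_1)$ and using that the maximum norm satisfies $\|AB\|\le N\|A\|\|B\|$, together with $\|g_i^{-1}\|<R_0+1$ and $\|g_1-g_2\|<2R_0$, one bounds $\|\gamma-I\|$ and $\|\gamma^{-1}-I\|$ by an explicit $R=R(r)$; thus $\gamma\in B^H(e,R)$, proving the claim. By the contrapositive of hypothesis (2), any $y$ with $f_R(y)>0$ then has injectivity radius $\geq r$, so
\[
\{y:\ \text{injectivity radius}<r\}\subseteq\{y:\ f_R(y)=0\}.
\]

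Finally I would invoke the convergence. The set $\{f_R=0\}=f_R^{-1}(0)$ is closed since $f_R$ is continuous, and by hypothesis (1) it is $\mu$-null. Hence by \eqref{eq:weak-conv} and the portmanteau theorem,
\[
\limsup_{n\to\infty}\mu_{Hx_n}\bigl(\{\text{injectivity radius}<r\}\bigr)\le\limsup_{n\to\infty}\mu_{Hx_n}(\{f_R=0\})\le\mu(\{f_R=0\})=0 ,
\]
which is the desired convergence. If one prefers to avoid the closed-set form of portmanteau, one can replace $\mathbbm 1_{\{f_R=0\}}$ by the bounded continuous majorant $\max(0,1-f_R/\delta)$, pass to the limit in $n$ using \eqref{eq:weak-conv}, and then let $\delta\to0$ with $\mu(\{0<f_R<\delta\})\to0$. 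The only real obstacle is the geometric translation in the second paragraph, namely matching the Riemannian definition of the injectivity radius with the matrix-norm ball appearing in hypothesis (2); once that correspondence is in hand, the measure-theoretic part is routine weak-convergence bookkeeping.
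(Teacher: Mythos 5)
Your proposal is correct and follows essentially the same route as the paper: the same identification $h\Delta\mapsto hx$, the same norm estimate turning a failure of injectivity on $B^H_{\rm Rie}(e,r)\subset B^H(e,R_0)$ into a nontrivial element of $\stab_H(y)\cap B^H(e,NR_0^2)$, and the same appeal to hypothesis (2) followed by weak convergence. The only (harmless) difference is the endgame: the paper extracts a compact set of large $\mu$-measure on which $f_{NR^2}>0$ and applies the open-set (liminf) form of portmanteau to a $\delta$-neighborhood, whereas you apply the closed-set (limsup) form directly to the $\mu$-null closed set $\{f_R=0\}$, which is marginally more direct and equally valid.
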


\begin{proof}
Let $R>0$. 
Let $Y=Hx\subset X$ be a periodic orbit, and put $\Delta=\stab(x)$.
The map $h\Delta\mapsto hx$ is a homeomorphism from $H/\Delta$ onto $Y$. 
Let $h\Delta\in H/\Delta$, and write $y=hx\in Y$. Suppose now that $h_1h\Delta=h_2h\Delta$ 
for some $h_1,h_2\in B^H(e,R)$.  
Then $\|h_2^{-1}h_1-I\|< NR^2$ and 
\[
h_2^{-1}h_1\in h\Delta h^{-1}=\stab_H(y).
\]

This and the assumption~(2) in the proposition imply that
\be\label{eq:inj-fR}
\text{If $f_{NR^2}(y)>0$, then the injectivity radius of $h\Delta$ is at least $r(R)$;}
\ee
recall that $B^H_{\rm Rie}(e,r(R))\subset B^H(e,R)$. 

Let now $\vare>0$. In view of our assumption~(1) in the proposition, 
there exists a compact subset $K_\vare\subset X$ so that 
\[
\text{$\mu(K_\vare)>1-\vare$ and $f_{NR^2}(x)>0$ for all $x\in K_\vare$.}
\] 

Since $f$ is continuous and $K_\vare$ is compact, there exists some $\delta>0$ so that 
$f_{2R}(x)>0$ for all $x\in \mathcal N_\delta(K_\vare)$, where $\mathcal N_\delta(K_\vare)$ 
denotes a finite open covering of the set $K_\vare$ with balls of radius $\delta$ centered at points in $K_\vare$.

Since $\mathcal N_\delta(K_\vare)$ is an open set and $\mu_{Hx_n}\to \mu$, we conclude that  
\[
\liminf_n\mu_{Hx_n}\bigl(\mathcal N_\delta(K_\vare)\bigr)\geq \mu\bigl(\mathcal N_\delta(K_\vare)\bigr)\geq 1-\vare.
\]
This and the fact that $\mathcal N_\delta(K_\vare)\subset\{y\in Hx_n: f_{NR^2}(y)>0\}$ imply: 
there exists some $n_0$ so that 
\[
\mu_{Hx_n}\Bigl(\{y\in Hx_n: f_{NR^2}(y)>0\}\Bigr)>1-2\vare\quad\text{for all $n>n_0$ }.
\] 

In consequence, using~\eqref{eq:inj-fR} we deduce that
\[
\mu_{H/\stab(x_{n})}\Bigl(\{y\in H/\stab(x_{n}): \text{injectivity radius of $y$ is $< r(R)$}\}\Bigr)<2\vare
\]
for all $n>n_0$. Since $r(R)\to\infty$ as $R\to \infty$, the claim follows.
\end{proof}

In subsequent sections, we discuss two settings where Proposition~\ref{prop:axiomatic-BS}
is applicable: the homogeneous setting is discussed in \S\ref{sec:homog-case} and 
the space of Abelian differentials in \S\ref{sec: Abelian notation}; see in particular Theorems~\ref{thm:geod-planes} and~\ref{thm:main}.

\subsection*{Acknowledgement} 
We would like to thank A.~Eskin, T.~Gelander, C.~Leininger, G.~Margulis, H.~Oh, and Alex Wright for helpful conversations.

\section{Homogeneous spaces}\label{sec:homog-case}
Let ${\bf G}$ be a connencted algebraic group defined over $\bbr$, and let $G={\bf G}(\bbr)^\circ$ 
be the connected component of the identity in the Lie group ${\bf G}(\mathbb R)$. 

Let $\Gamma\subset G$ be a lattice.
Throughout this section, we assume that {\em $\Gamma$ is torsion free}.  
Let $X=G/\Gamma$, and let $\mu_X$ denote the $G$-invariant probability measure on $X$. 

\begin{thm}\label{thm:BS-homog}
Let the notation be as above. 
Let $H\subset G$ be a connected semisimple Lie group. Assume that
\be\label{eq:H-not-normal}
\text{$\bigcap_{g\in G}gHg^{-1}$ is a finite group.}
\ee
Let $\{Hx_n: n\in\bbn\}$ be a sequence of periodic $H$-orbits in $X$ so that
\begin{enumerate}
\item There exists a compact subset $K\subset X$ with $Hx_n\cap K\neq\emptyset$ for all $n$. 
\item For every $H\subset L\subset G$ and any closed orbit $Lx$, at most finitely many of 
the orbits $Hx_n$ are contained in $Lx$.
\end{enumerate}
Then $H/\stab_H(x_n)$ Benjamini-Schramm converges to $H$.
\end{thm}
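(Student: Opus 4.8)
The plan is to verify the hypotheses of Proposition~\ref{prop:axiomatic-BS}. We are given a sequence of periodic $H$-orbits $Hx_n$; the first thing I would do is pass to the weak-$*$ limit of the orbit measures $\mu_{Hx_n}$. Since condition~(1) gives a common compact set $K$ meeting every orbit, and since periodic $H$-orbits carry $H$-invariant probability measures, the sequence $\{\mu_{Hx_n}\}$ should be tight—here I expect to invoke a non-escape-of-mass statement (of Dani--Margulis / Eskin--Mozes--Shah type) valid because $H$ is generated by unipotents up to its semisimplicity, so that no mass escapes to infinity. Passing to a subsequence, let $\mu$ be a weak-$*$ limit. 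By Ratner's measure classification (more precisely the linearization/measure-rigidity machinery for the action of the semisimple group $H$ generated by unipotents), $\mu$ is a finite sum of homogeneous measures, each supported on a closed orbit $Lx$ of some intermediate group $H\subset L\subset G$.

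The key point is then to identify $\mu$ using hypothesis~(2). Each ergodic component of $\mu$ is the homogeneous measure on some closed orbit $Lx$ with $H\subseteq L\subseteq G$. If $L$ were a proper subgroup, i.e. $L\neq G$, then by the standard argument in the linearization method a positive proportion of the mass of $\mu_{Hx_n}$ must, for large $n$, concentrate near that orbit $Lx$, which forces infinitely many of the $Hx_n$ to lie inside a single such closed orbit $Lx$—contradicting hypothesis~(2). Hence the only surviving component is the full invariant measure $\mu_X$ (the case $L=G$), so $\mu=\mu_X$ and in particular the limit is a probability measure and is independent of the subsequence, giving $\mu_{Hx_n}\to\mu_X$ as required by~\eqref{eq:weak-conv}.

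It remains to produce, for each $R>0$, the continuous function $f_R:X\to[0,\infty)$ of the proposition. The natural choice is to build $f_R$ from the geometry of the intersection of conjugates of $H$ with small balls: for $g\Gamma\in X$ one controls $\stab_H(g\Gamma)\cap B^H(e,R) = gHg^{-1}\cap g\Gamma g^{-1}\cap B^H(e,R)$, where I have used that the stabilizer of $g\Gamma$ in $H$ equals $H\cap g\Gamma g^{-1}$. Because $\Gamma$ is torsion free, any such element is either trivial or generates an infinite (and hence non-relatively-compact, or at least nontrivial discrete) subgroup. The hypothesis~\eqref{eq:H-not-normal} that $\bigcap_{g}gHg^{-1}$ is finite is exactly what prevents a fixed nontrivial unipotent or semisimple element of $H$ from lying in all conjugates, and I would use it to guarantee that the set of $g\Gamma$ with $\stab_H(g\Gamma)\cap B^H(e,R)\neq\{e\}$ is a closed set of $\mu_X$-measure zero. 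One then defines $f_R(x)$ to be the distance (or a continuous bump that is positive exactly off this bad set) from $x$ to $\{y : \stab_H(y)\cap B^H(e,R)\neq\{e\}\}$; continuity is immediate, property~(2) of the proposition holds by construction, and property~(1) follows from the measure-zero claim.

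The main obstacle I anticipate is the measure-classification step and the accompanying linearization argument that rules out proper intermediate $L$: showing that a limit measure whose ergodic components live on proper closed orbits would force infinitely many orbits into one such $Lx$ requires the quantitative non-divergence and the fact that the number of possible intermediate closed orbits through a fixed compact set is controlled, so that hypothesis~(2) can be applied to a definite one. The second delicate point is establishing that the bad set in the last paragraph is genuinely $\mu_X$-null; here one must argue that~\eqref{eq:H-not-normal} forces the union over nontrivial $\gamma$ of the loci $\{g\Gamma : g^{-1}\gamma g\in H\cap B^H(e,R)\}$ to be a countable union of proper subvarieties (or lower-dimensional submanifolds) of $X$, each of measure zero, whence the whole bad set is null.
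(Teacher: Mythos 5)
Your proposal follows essentially the same route as the paper: the convergence $\mu_{Hx_n}\to\mu_X$ is obtained from the Mozes--Shah theorem (non-divergence plus measure classification) together with hypotheses~(1) and~(2), the function $f_R$ is a continuous function positive exactly off the locus where $\stab_H(\cdot)\cap B^H(e,R)$ is nontrivial, and that locus is shown to be $\mu_X$-null by writing it as a countable union of proper subvarieties, with~\eqref{eq:H-not-normal} and the torsion-freeness of $\Gamma$ ruling out a nontrivial $\delta\in\Gamma$ conjugate into $H$ by every $g\in G$ --- exactly the content of the paper's Lemma~\ref{lem:trivial-stab}, which makes this precise via Chevalley's theorem applied to the Zariski closure of $H$. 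Aside from a harmless slip (one should have $\stab_H(g\Gamma)=H\cap g\Gamma g^{-1}$, not $gHg^{-1}\cap g\Gamma g^{-1}$), your argument matches the paper's.
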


Note that the condition {\em $\cap_{g\in G}gHg^{-1}$ is a finite group}
in the theorem is satisfied for instance if $G$ semisimple and $H$ does not contain 
any of the simple factors of $G$. 

\begin{thm}\label{thm:geod-planes}
Let $M$ be a real or complex hyperbolic $d$-manifold with $d\geq 3$. 
Assume that $M$ contains infinitely many properly immersed totally geodesic 
hypersurfaces $\{\Tc_n:n\in\bbn\}$. Then $\{\Tc_n\}$ Benjamini-Schramm converges to 
$\mathbb H^{d-1}$ in the real hyperbolic case and to $\mathbb {CH}^{d-1}$ in the complex case.
\end{thm}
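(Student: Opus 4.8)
The plan is to deduce this from Theorem~\ref{thm:BS-homog} by setting up the appropriate homogeneous dynamics. Let me think about what $G$, $H$, and the orbits should be.

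Let me consider the real hyperbolic case first. We have $M = \Gamma \backslash \mathbb{H}^d$ where $\Gamma \subset \mathrm{Isom}^+(\mathbb{H}^d) = \mathrm{SO}(d,1)^\circ$ is a torsion-free lattice (torsion-free because $M$ is a manifold). So I set $G = \mathrm{SO}(d,1)^\circ$ and $H = \mathrm{SO}(d-1,1)^\circ$, the stabilizer of a totally geodesic copy of $\mathbb{H}^{d-1}$ inside $\mathbb{H}^d$. A properly immersed totally geodesic hypersurface $\Tc_n$ corresponds exactly to a closed (periodic) $H$-orbit $Hx_n$ in $X = G/\Gamma$: the hypersurface being properly immersed is equivalent to the orbit being closed, and totally geodesic of codimension one means its frame bundle is an $H$-orbit. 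The $H$-invariant measure on $Hx_n$ pushes to the Riemannian volume on $\Tc_n$, and Benjamini-Schramm convergence of the orbits $H/\mathrm{stab}_H(x_n)$ to $H$ is the same as BS-convergence of the hypersurfaces $\Tc_n$ to $\mathbb{H}^{d-1}$. The complex case is identical with $G = \mathrm{SU}(d,1)^\circ$ (or $\mathrm{PU}(d,1)$) and $H = \mathrm{S}(\mathrm{U}(d-1,1)\times \mathrm{U}(1))^\circ$ or the appropriate $\mathrm{U}(d-1,1)$-type stabilizer of a totally geodesic $\mathbb{CH}^{d-1}$.

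Now I must verify the hypotheses of Theorem~\ref{thm:BS-homog}. First, $H$ is connected and semisimple, and I must check~\eqref{eq:H-not-normal}, that $\bigcap_{g\in G} gHg^{-1}$ is finite. Since $G = \mathrm{SO}(d,1)^\circ$ is simple (for $d \geq 2$) and $H \subsetneq G$ is a proper subgroup, any normal subgroup of $G$ contained in $H$ lies in the center of $G$, which is finite; this gives the claim (and similarly for $\mathrm{SU}(d,1)$). Here the hypothesis $d \geq 3$ guarantees $H = \mathrm{SO}(d-1,1)^\circ$ is itself noncompact semisimple rather than degenerate. Next, hypothesis~(1): I need a compact $K \subset X$ meeting every $Hx_n$. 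This is where I expect the main work. If $M$ is itself compact, take $K = X$ and we are done trivially. If $M$ is noncompact (finite-volume), I need to rule out the hypersurfaces escaping to infinity; the point is that each totally geodesic hypersurface $\Tc_n$ of finite volume has a definite-size compact core, and more concretely one can arrange all $\Tc_n$ to pass through a fixed compact part of $M$ — this uses that there are only finitely many cusps and that a geodesic hypersurface entering a cusp must either close up or limit in a controlled way. A clean way is to note that by a result on the geometry of the cusps, there is a fixed compact set through which every properly immersed totally geodesic hypersurface must pass, giving $K$.

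The remaining hypothesis~(2) — that for each intermediate $H \subset L \subset G$ and each closed orbit $Lx$, only finitely many $Hx_n$ lie in $Lx$ — is where the hypothesis of \emph{infinitely many distinct} hypersurfaces is used, and I expect this to be the second substantive point. Since $H$ is a maximal connected subgroup of $G$ in these rank-one cases (the stabilizer of a codimension-one totally geodesic subspace is maximal), the only intermediate $L$ are $L = H$ (forcing $Lx = Hx_n$, which contains only one orbit, fine) and $L = G$ (forcing $Lx = X$, so the condition becomes: only finitely many $Hx_n$ — but we have infinitely many distinct ones, so this must be reinterpreted). The correct reading is that condition~(2) requires the relevant finiteness only for \emph{proper} closed orbits $Lx \subsetneq X$; since $H$ is maximal, no proper intermediate $L$ exists, so there are no nontrivial constraints to check beyond the trivial $L = H$ case, and condition~(2) holds automatically because the $\Tc_n$ are pairwise distinct, hence no two coincide as the single $H$-orbit inside a given $Hx_m$. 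Having verified all hypotheses, Theorem~\ref{thm:BS-homog} yields BS-convergence of $H/\mathrm{stab}_H(x_n)$ to $H$, which translates back to BS-convergence of $\Tc_n$ to $\mathbb{H}^{d-1}$ (resp. $\mathbb{CH}^{d-1}$), completing the proof. The main obstacle is the careful setup of hypothesis~(1), controlling the hypersurfaces in the cusps of a noncompact $M$.
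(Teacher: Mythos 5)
Your proposal matches the paper's proof: the paper likewise sets $G=\mathrm{SO}(d,1)^\circ$, $H=\mathrm{SO}(d-1,1)^\circ$ (resp.\ the complex analogues), lifts each $\Tc_n$ to a closed $H$-orbit in $G/\Gamma$, and invokes maximality of $H$ among connected non-parabolic subgroups to see that the hypotheses of Theorem~\ref{thm:BS-homog} hold. Your extra discussion of hypotheses (1) and (2) --- nondivergence of the closed $H$-orbits into the cusps, and reading (2) as a condition on \emph{proper} intermediate closed orbits --- only fills in details the paper leaves implicit.
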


\begin{proof}
We prove the result for the case real hyperbolic manifold, the complex case is similar.
 
Let $G={\rm SO}(d,1)^\circ$, $\Gamma=\pi_1(M)$, and $H={\rm SO}(d-1,1)^\circ$. 
Then $\Tc_n$ lifts to a closed orbit $Hx_n$ in $X=G/\Gamma$ for every $n$.

Note that $H\subset G$ is a maximal connected subgroup which is not a parabolic subgroup of $G$. 
Therefore, the assumptions in Theorem~\ref{thm:BS-homog} are satisfied for $G$, $H$, and the orbits $\{Hx_n:n\in\bbn\}$.
The claim thus follows from Theorem~\ref{thm:BS-homog}.
\end{proof}

We note that when $\Gamma$ is arithmetic Theorem~\ref{thm:BS-homog} follows from the work~\cite{7-Samurai}. 
This condition holds if $\Gamma$ is an irreducible lattice and the real rank of $G$ is at least two by Margulis' arithmeticity theorem~\cite{Margulis-Arith-Inv}. Moreover, it was proved by Corlette and Gromov-Shoen~\cite{Corl, Grom-Sch} that lattices in 
${\rm SP}(n,1)$ and $F_4^{-20}$ are arithmetic.
While non-arithmetic lattices in ${\rm SO}(n,1)$, for all $n$, and ${\rm SU}(n,1)$, for $n=2,3$, exist, 
recent developments,~\cite{MM, BFMS, BFMS-Complex}, show that the presence of infinitely 
many totally geodesic hyperplanes\footnote{The works~\cite{BFMS, BFMS-Complex} are indeed more general and allow for properly immersed maximal totally geodesic submanifolds of dimension at least $2$.} in real and complex hyperbolic manifolds of finite volume imply arithmeticity of their fundamental group. Therefore, in all interesting cases, 
the assertion of Theorem~\ref{thm:BS-homog} can be obtained by combining existing rather deep results in the literature. 
However, the proof we provide is different and is arguably simpler. In particular, our proof does not rely on arithmeticity of $\Gamma$,
and relies only on a special case of a equidistribution theorem of Mozes and Shah~\cite{Mozes-Shah}.

\begin{lemma}\label{lem:trivial-stab}
Let the notation and the assumptions be as in Theorem~\ref{thm:BS-homog}. 
Then for $\mu_X$-a.e.\ $x\in X$ we have 
\[
\stab(x)\cap H=\{e\}
\]
\end{lemma}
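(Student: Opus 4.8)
The plan is to work on $X=G/\Gamma$ through the quotient map $\pi\colon G\to X$ and to show that the exceptional set
\[
E=\{g\Gamma\in X: g\Gamma g^{-1}\cap H\neq\{e\}\}
\]
is $\mu_X$-null; since $\stab(g\Gamma)=g\Gamma g^{-1}$, this is exactly the complement of the asserted a.e.\ statement. Writing $\pi^{-1}(E)=\bigcup_{\gamma\in\Gamma\setminus\{e\}}B_\gamma$ with $B_\gamma=\{g\in G: g\gamma g^{-1}\in H\}$, and noting that this union is right-$\Gamma$-invariant (conjugation permutes $\Gamma\setminus\{e\}$), it suffices—$\Gamma$ being countable and $\mu_X$ the pushforward of Haar measure via a Borel fundamental domain—to prove that each $B_\gamma$ with $\gamma\neq e$ is Haar-null in $G$.

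Fix $\gamma\neq e$ and let $\Hbf\subset\Gbf$ be the Zariski closure of $H$, a semisimple $\mathbb R$-subgroup with $H$ open in $\Hbf(\mathbb R)$, so that $\Hbf(\mathbb R)^\circ=H$. The conjugation morphism $c_\gamma\colon\Gbf\to\Gbf$, $g\mapsto g\gamma g^{-1}$, is defined over $\mathbb R$, and $B_\gamma\subset Z_\gamma(\mathbb R)$ where $Z_\gamma:=c_\gamma^{-1}(\Hbf)$ is Zariski closed in the irreducible variety $\Gbf$. I would then invoke the dichotomy: either $Z_\gamma\subsetneq\Gbf$, in which case $Z_\gamma(\mathbb R)$ is a proper real-algebraic subset of $\Gbf(\mathbb R)$ and hence Haar-null, so $B_\gamma$ is null as desired; or $Z_\gamma=\Gbf$, which means $g\gamma g^{-1}\in\Hbf$ for every $g\in\Gbf$, i.e.\ $\gamma$ lies in the normal core $\Nbf:=\bigcap_{g\in\Gbf}g\Hbf g^{-1}$.

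It remains to rule out the second alternative, and this is where the hypothesis \eqref{eq:H-not-normal} enters. $\Nbf$ is the largest normal algebraic subgroup of $\Gbf$ contained in $\Hbf$; its real identity component $\Nbf(\mathbb R)^\circ$ is a connected subgroup of $\Hbf(\mathbb R)^\circ=H$ that is normalized by the connected group $G=\Gbf(\mathbb R)^\circ$, hence $\Nbf(\mathbb R)^\circ\subset\bigcap_{g\in G}gHg^{-1}$, which is finite by assumption. A connected finite group is trivial, so $\dim\Nbf=0$ and $\Nbf$ is finite. Thus $\gamma\in\Nbf\cap\Gamma$, a finite subgroup of the torsion-free group $\Gamma$, forcing $\gamma=e$ and contradicting $\gamma\neq e$. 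Therefore the second alternative never occurs, every $B_\gamma$ with $\gamma\neq e$ is Haar-null, and $E$ is $\mu_X$-null.

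The main obstacle I anticipate is the algebraic-group bookkeeping of the last paragraph: passing correctly between the real normal core $\bigcap_{g\in G}gHg^{-1}$, assumed finite, and the algebraic normal core $\Nbf$, while justifying $\Hbf(\mathbb R)^\circ=H$ and the normality of $\Nbf(\mathbb R)^\circ$ in $G$. One must also take care that $G$ is Zariski dense in the irreducible group $\Gbf$, so that ``proper subvariety implies measure zero'' is valid, and that the descent of Haar-null sets through $\pi$ to $\mu_X$-null sets is carried out via a fundamental domain for $\Gamma$.
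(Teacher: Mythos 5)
Your proof is correct and follows essentially the same route as the paper's: both decompose the exceptional set over $\gamma\in\Gamma\setminus\{e\}$ into sets cut out by real algebraic conditions (your $Z_\gamma=c_\gamma^{-1}(\Hbf)$ is, up to inversion, exactly the paper's variety ${\bf F}_\gamma$, which the paper realizes via Chevalley's theorem as $\{g\in\Gbf:\gamma gw=gw\}$), then use Zariski density of $G$ in the connected group $\Gbf$ to conclude that each proper such variety meets $G$ in a Haar-null set, and finally rule out the degenerate alternative by combining hypothesis~\eqref{eq:H-not-normal} with torsion-freeness of $\Gamma$. The differences are only bookkeeping: you avoid Chevalley by taking the preimage of $\Hbf$ under the conjugation morphism directly, and you treat the degenerate case through finiteness of the algebraic normal core $\Nbf$ (using $H=\Hbf(\bbr)^\circ$), whereas the paper raises the offending element $\delta$ to a finite power to pass from $\bigcap_{g\in G} gH'g^{-1}$ into the finite group $\bigcap_{g\in G} gHg^{-1}$ via $[H':H]<\infty$.
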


\begin{proof}
Let $\bf H$ denote the Zariski closure of $H$ in $\bf G$.
Since $H$ is a connected semisimple Lie group, it has finite index in the group $H':={\bf H}(\bbr)\cap G$.
 
By Chevalley's theorem, there exists a finite dimensional (real) representation $(\rho, W)$ 
of $\bf G$ and a vector $w\in W$ so that ${\bf H}=\{g\in {\bf G}: gw=w\}$. In particular, we conclude that 
\be\label{eq:Chevalley}
H'=G\cap {\bf H}=\{g\in G: gw=w\}.
\ee

Let now $x=g_0\Gamma$. Then $\stab(x)=g_0\Gamma g_0^{-1}$, and 
$H\cap g_0\Gamma g_0^{-1}$ is nontrivial if and only if there exists some $e\neq\gamma\in\Gamma$ 
so that $\gamma\in g_0^{-1}Hg_0$. Since $H\subset H'$, we conclude that $\gamma g_0^{-1}w=g_0^{-1}w$. Hence,  
\[
g_0^{-1}\in {\bf F}_\gamma=\{g\in {\bf G}: \gamma gw=gw\}.
\] 

For every $\gamma\in\Gamma$, the set ${\bf F}_\gamma$ is an algebraic variety defined over $\bbr$. 
Moreover, $G={\bf G}(\bbr)^\circ$ is Zariski dense in ${\bf G}$. These and the fact that $\Gamma$ is countable imply that 
unless there exists some $e\neq\delta\in \Gamma$ so that 
\[
\text{$\delta gw=gw\quad$ for all $g\in G$,} 
\]
the lemma holds --- indeed in that case $G\setminus \Bigl(\cup_{\gamma\in\Gamma} {\bf F}_\gamma\Bigr)$ is a conull subset of $G$, 
and for every $g$ in this set we have $H\cap \stab(g\Gamma)=\{e\}$.

Assume now to the contrary that $G=\{g\in G: \delta gw=gw\}$ for some nontrivial 
$\delta\in\Gamma$. Then by~\eqref{eq:Chevalley} we have $\delta\in gH'g^{-1}$ for all $g\in G$, hence,
\[
\delta\in\bigcap_{g\in G} gH'g^{-1}.
\]

Since $[H':H]<\infty$, there exists some $n$ so that $\delta^n\in gHg^{-1}$ for all $g\in G$. That is,
$\delta^n\in\cap_{g\in G}\, gHg^{-1}$. However, $\Gamma$ is torsion free and $\cap_{g\in G}\, gHg^{-1}$ is a finite group.
This contradiction completes the proof.
\end{proof}

\begin{proof}[Proof of Theorem~\ref{thm:BS-homog}]
We may and will assume that $G\subset \SL_N(\bbr)$ for some $N$. 
As before, for all subgroups $L\subset G$ and all $R>0$, let 
\[
B^L(e, R)=\{g\in L: \|g-I\|< R \text{ and }\|g^{-1}-I\|< R\}
\]
where $\|\;\|$ denotes the maximum norm on $\SL_N(\bbr)$ with respect to the standard basis.  

Recall that $\mu_X$ denotes the $G$-invariant probability measure on $X$.
First note that by a theorem of Mozes and Shah~\cite{Mozes-Shah} and our 
assumptions~(1) and~(2) in the theorem, we have
\be\label{eq:muHxn-to-muX}
\mu_{Hx_n}\to \mu_X\quad\text{as $n\to\infty$}.
\ee

Let $\dist$ denote the right invariant Riemannian metric on $G$ induced using the killing form. 
Let $R>1$, and put ${\rm Stab}(x)_R={\rm Stab}(x)\cap B^G(e,R)$; this is a finite set. 
Define $f_R:X\to[0,\infty)$ by 
\[
f_R(x)=\dist_{\rm H}\Big(\bar{B}^H(e,R),\bigl({\rm Stab}(x)_R\setminus \{e\}\bigr)\Bigr)
\] 
where $\dist_{\rm H}$ is the Hausdorff distance and $\bar{B}^H(e,R)$ is the closure of $B^H(e,R)$. 

Since $\stab(g\Gamma)=g\Gamma g^{-1}$ and $R$ is fixed, $f_R$ is continuous. 
Furthermore, $f_R(x)>0$ for some $x\in X$ if and only if ${B}^H(e,R)\cap {\rm Stab}(x)=\{e\}$.
In particular, by Lemma~\ref{lem:trivial-stab} we have 
\[
\text{$f_R(x)>0\quad$ for $\mu_X$-a.e.\ $x\in X$.}
\]
Altogether, we deduce that $f_R$ satisfies the conditions in Proposition~\ref{prop:axiomatic-BS}.

The theorem thus follows from Proposition~\ref{prop:axiomatic-BS} in view of~\eqref{eq:muHxn-to-muX}. 
\end{proof}

\section{The space of Abelian differentials}\label{sec: Abelian notation}
Let $g\geq 2$, and let $\mathcal T_g$ denote the Teichm\"{u}ller space of complex structure on a compact Riemann 
surface of genus $g$. We denote by $\cM_g$ the corresponding moduli space, i.e., the quotient of $\mathcal T_g$ by 
the mapping class group, $\MC_g$. 

As it is well-known, $\MC_g$ is not torsion free, however, it has subgroups of finite index which are torsion free --- indeed
the kernel of the natural map from $\MC_g$ to $\Sp_{2g}(\zed/3\zed)$ is torsion free.

We fix, once and for all, a covering map 
\[
\tcM_g\to \cM_g
\] 
which corresponds to a torsion free finite index subgroup of $\MC_g$.

Let $f:\bbh^2\to \cM_g$ be an isometric immersion for the Teichm\"{u}ller metric.  
Typically, $f(\bbh^2)$ is dense in $\cM_g$, however, there are situations where $f(\bbh^2)$ is an algebraic curve
in $\cM_g$. In the latter case, the stabilizer $\Delta$ of $f$ is a lattice in ${\rm Isom}(\bbh^2)$, and we obtain a {\em Teichm\"uller curve}   
\[
f:\Tc=\bbh^2/\Delta\to\cM_g.
\]

For every $g\geq 2$, the moduli space $\cM_g$ contains a dense family of Teichm\"{u}ller curves 
which arise as branched cover of flat tori. 
There are also examples of infinite families of {\em primitive} Teichm\"{u}ller curves, i.e., Teichm\"{u}ller curves which do not arise as a branched cover of flat tori, in $\cM_g$ when $g=2,3,4$,~\cite{McMullen-SL2, MMW-TeichCurves}.


\begin{thm}\label{thm:main}
Let $\{\Tc_n:n\in\bbn\}$ be an infinite family of Techim\"uller curves in $\cM_g$.
For every $n$, let $\tTc_n\to\Tc_n$ be a lift of $\Tc_n$ to $\tcM_g$.
Then $\{\tTc_n:n\in\bbn\}$ Benjamini-Schramm converges to $\mathbb H^2$.
\end{thm}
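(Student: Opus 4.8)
The plan is to realize the Teichm\"uller curves as closed $\SL_2(\bbr)$-orbits in a space of translation surfaces and to apply Proposition~\ref{prop:axiomatic-BS} with $H=\SL_2(\bbr)$, in direct parallel with the homogeneous case. Let $\Lambda\subset\MC_g$ be the torsion free finite index subgroup corresponding to the fixed cover $\tcM_g\to\cM_g$. A Teichm\"uller curve is generated by a single Veech surface lying in one stratum $\cH(\kappa)$ of Abelian differentials, and there are only finitely many strata in genus $g$; so every subsequence of $\{\Tc_n\}$ has a further subsequence lying in a single stratum. Let $X=\Omega_1\cT_g(\kappa)/\Lambda$ be the corresponding area-one stratum over $\tcM_g$, a finite cover of $\cH_1(\kappa)$ carrying the commuting actions of $\MC_g$ and $\SL_2(\bbr)$. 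Each such $\Tc_n$ then lifts to a closed orbit $Hx_n\subset X$ whose stabilizer $\stab_H(x_n)$ is the lifted Veech group, a torsion free lattice in $H$, and $\tTc_n=\bbh^2/\stab_H(x_n)$ up to the compact factor $\SO(2)$. As in the proof of Theorem~\ref{thm:geod-planes}, Benjamini--Schramm convergence of $\tTc_n$ to $\bbh^2$ is equivalent to that of $H/\stab_H(x_n)$ to $H$, since the two injectivity radii differ by a bounded amount. Because it suffices to show that every subsequence admits a further subsequence along which the bad measure tends to $0$, I fix from now on such a sub-subsequence, all of whose orbits lie in a single $X$.

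For the equidistribution hypothesis~\eqref{eq:weak-conv}, I would invoke the measure classification and equidistribution theorems of Eskin--Mirzakhani and Eskin--Mirzakhani--Mohammadi, which play here the role that Mozes--Shah played in Theorem~\ref{thm:BS-homog}: after a further extraction, $\mu_{Hx_n}\to\mu$ where $\mu$ is an affine, hence $\SL_2(\bbr)$-ergodic, probability measure supported on an affine invariant submanifold $\cN\subset X$, the absence of escape of mass being part of their package. The structural point replacing condition~(2) of Theorem~\ref{thm:BS-homog} is that $\cN$ cannot be a single closed orbit: by the isolation statement in that theory, all but finitely many of the distinct closed orbits $Hx_n$ are contained in $\cN$, so $\cN$ properly contains a closed orbit and therefore $\dim\cN>\dim\SL_2(\bbr)=3$. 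In particular every single orbit, and each $\Tc_n$, is $\mu$-null.

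It remains to produce, for each $R>0$, the function $f_R$, and this reduces to the analogue of Lemma~\ref{lem:trivial-stab}: for $\mu$-a.e.\ $x\in\cN$ one has $\stab_H(x)=\{e\}$. Granting this, I define $f_R$ exactly as in the proof of Theorem~\ref{thm:BS-homog}, by $f_R(x)=\dist_{\rm H}\bigl(\bar{B}^H(e,R),\stab_H(x)_R\setminus\{e\}\bigr)$ with $\stab_H(x)_R=\stab_H(x)\cap B^H(e,R)$; it is continuous because stabilizers vary continuously with $x$ and $R$ is fixed, it is positive precisely when $B^H(e,R)\cap\stab_H(x)=\{e\}$, and positivity forces condition~(2) of Proposition~\ref{prop:axiomatic-BS}. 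For the a.e.\ triviality, write a point of $X$ as $\Lambda\hat x$ with $\hat x$ in the bundle over Teichm\"uller space; then $A\in\stab_H(\Lambda\hat x)$ means $A\hat x=\gamma\hat x$ for some $\gamma\in\Lambda$. Since the $\MC_g$- and $\SL_2(\bbr)$-actions commute, each set $S_\gamma=\{x:\gamma\hat x\in\SL_2(\bbr)\hat x\}$ is $\SL_2(\bbr)$-invariant, so $\mu(S_\gamma)\in\{0,1\}$ by ergodicity, and the full nontrivial-stabilizer locus is $S=\bigcup_{e\neq\gamma\in\Lambda}S_\gamma$. If $A=I$ then $\gamma\hat x=\hat x$; but $\Lambda$ is torsion free, hence acts freely on Teichm\"uller space, forcing $\gamma=e$. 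If some $S_\gamma$ had full measure, then $\gamma$ would be realized as an affine automorphism on a full-measure, hence by analyticity on all of $\cN$, contradicting the genericity of trivial Veech groups on an affine invariant submanifold of dimension $>3$ (the locus of surfaces carrying a fixed nontrivial affine symmetry is a proper affine invariant subvariety, which $\mu$ does not charge). Hence $\mu(S)=0$, the map $f_R$ satisfies conditions~(1)--(2), and Proposition~\ref{prop:axiomatic-BS} yields the claim along the chosen subsequence; the subsequence argument then gives the theorem.

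The main obstacle is precisely this last lemma, the non-homogeneous substitute for Lemma~\ref{lem:trivial-stab}. In the homogeneous setting generic triviality of stabilizers followed cleanly from Chevalley's theorem, Zariski density of $G$, and torsion freeness of $\Gamma$; here one must instead combine the torsion freeness of $\Lambda$ (to dispose of translation automorphisms through the free action on Teichm\"uller space) with the rigidity of the $\SL_2(\bbr)$-action --- ergodicity of affine measures, the commutation of the two group actions, and the inequality $\dim\cN>3$ --- to exclude that a positive-measure set of surfaces in $\cN$ shares a common affine symmetry. Establishing that $\cN$ is not a single closed orbit, so that distinct Teichm\"uller curves genuinely spread out, is the crucial use of the distinctness hypothesis together with the deep equidistribution and isolation results, and is where the real work lies.
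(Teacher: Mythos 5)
Your overall architecture is the same as the paper's: pass to a subsequence lying in a single stratum, invoke the Eskin--Mirzakhani--Mohammadi equidistribution and isolation theorems to get $\mu_{Hx_n}\to\nu$ for an affine invariant $(\cN,\nu)$ that is not a single closed orbit (hence $\dim\cN>3$), and then feed a suitable $f_R$ into Proposition~\ref{prop:axiomatic-BS}. The genuine gap is exactly at the point you flag as ``where the real work lies'': the a.e.\ triviality of $\stab_{\SL_2(\reals)}(x)$ for $\nu$-a.e.\ $x$. Your ergodicity argument correctly reduces this to showing that no single nontrivial $\gamma\in\Lambda$ has $S_\gamma=\{x:\gamma\hat x\in\SL_2(\reals)\hat x\}$ of full measure, but the step that rules this out is circular: you appeal to ``the genericity of trivial Veech groups on an affine invariant submanifold of dimension $>3$,'' which \emph{is} the statement being proved, and to the claim that ``the locus of surfaces carrying a fixed nontrivial affine symmetry is a proper affine invariant subvariety, which $\mu$ does not charge.'' That claim is neither proved nor correct as stated: for a pseudo-Anosov $\gamma$ the locus $S_\gamma$ is the ($3$-dimensional) unit tangent bundle of a single Teichm\"uller disk, which in general is not closed in the stratum and is not an affine invariant submanifold in the sense of \S\ref{sec: affine inv mnfld}; for a multi-twist the locus is an uncountable union of disks and again has no a priori affine structure. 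The ``by analyticity, full measure implies all of $\cN$'' step is also not justified, since the defining condition is existential in the matrix $A$, which varies with $x$.

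What the paper actually does here (Proposition~\ref{prop:stabilizer-H-alpha}) is a direct dimension count, case by case on the type of the stabilizing mapping class via the classification of isometries of a Teichm\"uller disk: elliptic elements are torsion and hence excluded by the choice of cover; a pseudo-Anosov stabilizes exactly one disk, so $P(\phi)$ is $3$-dimensional and null since $\dim\cN>3$; and for a multi-twist a train-track/period-coordinate computation shows the stabilized locus is a countable union of sets of dimension $\tfrac12\dim(\reals\widetilde\cM)+2<\dim(\reals\widetilde\cM)$. You would need to supply this (or an equivalent) argument to close the gap. Two smaller points: your $f_R$ built from the Hausdorff distance between $\bar B^H(e,R)$ and $\stab_H(x)\cap B^H(e,R)$ requires continuity of the set-valued map $x\mapsto\stab_H(x)\cap B^H(e,R)$, which is clear in the homogeneous setting but not in a stratum; the paper avoids this by using $f_R(x)=\min\{\dist_{\rm Teich}(x,hx):h\in\bar B^H(e,R)\setminus B^H_{\rm Rie}(e,r_x)\}$, whose continuity it verifies directly. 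Also, the conclusion that $\cN$ is not a closed orbit follows in the paper immediately from the cited isolation theorem rather than from your intermediate ``all but finitely many orbits are contained in $\cN$'' step, though your version is also fine.
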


C.~Leininger and A.~Wright (independently) have supplied an alternative 
(and arguably softer) proof of Theorem~\ref{thm:main}.
This argument relies on the fact that the length of shortest geodesic on Teichm\"uller curves tends to infinity, see Proposition~\ref{prop:stabilizer-H-alpha}, and is independent of measure classification theorems. 
We also thank T.~Gelander for helpful communications regarding IRSs.

Here, we present a proof based on Proposition~\ref{prop:axiomatic-BS} and~\cite{EMM-Orbit} 
to highlight a unifying theme between the homogeneous setting and the setting at hand.



\medskip

For every $M\in\cT_g$, let $\Omega(M)$ be the $g$-dimensional space of 
holomorphic 1-forms on $M$. By integrating a non-zero form $\omega\in\Omega(M)$ we obtain, away from the zeros of $\omega$, a flat metric $|\omega|$ on $M$ and local charts whose transition functions are translations.   

Form a vector bundle over the Teichm\"{u}ller space $\cT_g$ where the fiber over each point is $\Omega(M)$. 
Let $\Omega\mathcal T_g\to\mathcal T_g$ be the complement of the zero section of this  
vector bundle.

There is a natural action of ${\rm GL}^+_2(\reals)$ (and hence of $\SL_2(\reals)$) on $\Omega\mathcal T_g$:
given a holomorphic $1$-form $\omega=\Re(\omega)+i\Im(\omega)$ and 
$h =\begin{pmatrix} a & b \\ c & d \end{pmatrix} \in{\rm GL}^+_2(\reals)$,
\be\label{eq:SL2R-OmegaT}
h\cdot\omega=\begin{pmatrix} i \\ i \end{pmatrix}\begin{pmatrix} a & b \\ c & d \end{pmatrix}\begin{pmatrix} \Re(\omega) \\ \Im(\omega) \end{pmatrix}.
\ee

We let $\Omega\cM_g\to\cM_g$ denote the quotient of 
$\Omega\cT_g$ by action of the mapping class group of $S_g$.

For every $\alpha=(\alpha_1,\ldots,\alpha_{\noz})$ with $\sum\alpha_i=2g-2$, 
let $\cH(\alpha)$ denote the set of $(M,\omega)\in\Omega\cM_g$ where $\omega$ has zeros of type $\alpha$.
Then $\Omega\cM_g=\bigsqcup \cH(\alpha)$.

Let $(M,\omega)\in\cH(\alpha)$ and let $\Sigma \subset M$ denote the set of zeroes of $\omega$. Let
$\{ \gamma_1, \dots, \gamma_k\}$ denote a $\zed$-basis for
the relative 
homology group $H_1(M,\Sigma, \zed)$. {}{(It is
  convenient to assume that the basis is obtained by extending a
  symplectic basis for the absolute homology group $H_1(M,\zed)$.)}
We can define a map $\Phi:
\cH(\alpha) \to \cx^k$ by 
\begin{displaymath}
\Phi(M,\omega) = \left( \int_{\gamma_1} \omega, \dots, \int_{\gamma_k}
  w \right)
\end{displaymath}
The map $\Phi$ (which depends on a choice of the basis $\{ \gamma_1,
\dots, \gamma_k\}$) is a local coordinate system on $(M,\omega)$.  
Alternatively,
we may think of the cohomology class $[\omega] \in H^1(M,\Sigma, \cx)$ as a
local coordinate on the stratum $\cH(\alpha)$. We
will call these coordinates {\em period coordinates}. 

The area of a translation surface is given by 
\begin{displaymath}
a(M,\omega) = \frac{i}{2} \int_M \omega \wedge \bar{\omega}.
\end{displaymath}
We let $\Omega_1\mathcal M_g$  and $\cH_1(\alpha)$ denote the locus of unit area $1$-forms in $\Omega\mathcal M_g$ and $\cH(\alpha)$, respecitively.

\subsection*{The $\SL_2(\reals)$-action and the Kontsevich-Zorich cocycle}
The action in \eqref{eq:SL2R-OmegaT} descends to an action of $\SL_2(\reals)$ on $\cH_1(\alpha)$.
Indeed, write $\Phi(M,\omega)$ as a $2 \cross d$ matrix $x$. The action of 
$\SL_2(\reals)$ in these coordinates is linear. 

Let $\MC(M,\Sigma)$ be the mapping class group of $M$ fixing each zero of $\omega$. 
We choose a fundamental domain for the action of $\MC(M,\Sigma)$, and
think of the dynamics on the fundamental domain. Then, the $\SL_2(\reals)$ action becomes
\be\label{eq:SL2R-KZ}
x = \begin{pmatrix} \Re(\omega) \\ \Im(\omega) \end{pmatrix}
\mapsto hx = \begin{pmatrix} a & b \\ c & d \end{pmatrix} \begin{pmatrix} \Re(\omega) \\ \Im(\omega)
\end{pmatrix} A(h,x),
\ee
where $A(h,x) \in \Sp_{2g}(\zed) \ltimes \zed^{\noz-1}$
is the {\em Kontsevich-Zorich cocycle}. 

Thus, $A(h,x)$ is the change of basis one needs to perform to return the
point $hx$ to the fundamental domain. It can be interpreted as the
monodromy of the Gauss-Manin connection (restricted to the orbit of $\SL_2(\reals)$). 

\subsection{Affine measures and manifolds}\label{sec: affine inv mnfld}

For a subset $\mathcal E \subset \cH_1(\alpha)$ we write
\begin{displaymath}
\reals \mathcal E = \{ (M, t \omega) : (M,\omega) \in \mathcal E, t \in\reals \} \subset \cH(\alpha).
\end{displaymath}

An ergodic $\SL_2(\reals)$-invariant probability measure $\nu$ on
$\cH_1(\alpha)$ is called {\em affine} if the following hold:
\begin{itemize}
\item[(i)] The support $\cM$ of $\nu$ is an 
{\em immersed submanifold} of
  $\cH_1(\alpha)$, i.e., there exists a manifold $\cN$ and a proper continuous
  map $f: \cN \to \cH_1(\alpha)$ so that $\cM =
  f(\cN)$. The self-intersection set of $\cM$, i.e., the set of
  points of $\cM$ which do not have a 
  unique preimage under $f$, is a closed subset of $\cM$ of
  $\nu$-measure $0$.   Furthermore, each point in $\cN$ has a
  neighborhood $U$ such 
  that locally $\reals f(U)$ is given by a complex linear subspace defined
  over $\reals$ in the period coordinates.
\item[(ii)] Let $\bar\nu$ be the measure supported on $\reals
  \cM$ so that $d\bar\nu = 
    d\nu da$. Then each point in $\cN$ has a
      neighborhood $U$ such that the restriction of $\bar\nu$ to $\reals
      f(U)$ is an affine  linear measure in the period
    coordinates on $\reals f(U)$, i.e., it is (up to normalization) the
    restriction of the Lebesgue measure to the subspace $\reals f(U)$.
\end{itemize}

A suborbifold $\cM$ for which there exists a measure
$\nu$ such that the pair $(\cM, \nu)$ 
satisfies (i) and (ii) is said to be {\em affine invariant submanifold}.

We sometimes write $\nu_\cM$ to indicate the affine invariant measure $\nu$ on affine invariant submanifold $\cM$. 

Note that in particular, any affine invariant submanifold is a closed
subset of $\cH_1(\alpha)$ which is invariant under the action of $\SL_2(\reals)$, 
and which in period coordinates is an affine subspace.
We also consider the entire stratum {$\cH_1(\alpha)$} 
to be an (improper) affine invariant submanifold. 

\subsection{Typical affine stabilizer is trivial}
In this section, we prove the following statement:

\begin{propos}\label{prop:stabilizer-H-alpha}
Let $(\cM,\nu)\subset \cH_1(\alpha) \subset \Omega \tcM_{g,n}$ be an affine invariant 
submanifold. Assume that $\cM$ is {\em not} a Teichm\"{u}ller curve. 
Then for $\nu$-a.e.\ $x\in\cM$,
\[
\stab_{\SL_2(\reals)}(x) 
\]
is trivial. 
\end{propos}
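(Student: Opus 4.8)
The plan is to show that the $\nu$-measure of the set $S=\{x\in\cM:\stab_{\SL_2(\reals)}(x)\neq\{e\}\}$ is zero whenever $\cM$ is not a Teichm\"uller curve. First I would record two structural facts about the stabilizer. For $x=(M,\omega)$ the group $\Gamma(x):=\stab_{\SL_2(\reals)}(x)$ is exactly the image under the derivative map of those affine automorphisms of $(M,\omega)$ whose underlying mapping class lies in the fixed torsion-free subgroup $\Gamma_0$ of $\MC_g$ (recall $\cM\subset\Omega\tcM_g$, so only mapping classes in $\Gamma_0$ are seen). Thus $\Gamma(x)$ is the intersection of the Veech group of $(M,\omega)$ with a finite-index subgroup, hence a \emph{discrete} subgroup of $\SL_2(\reals)$; moreover it is \emph{torsion free}, since a torsion element of $\Gamma(x)$ would be the derivative of an affine automorphism that is itself of finite order in the mapping class group, hence trivial because $\Gamma_0$ has no torsion. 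The assignment $x\mapsto\Gamma(x)$ is intrinsic to the unmarked surface and $\SL_2(\reals)$-equivariant: $\Gamma(hx)=h\Gamma(x)h^{-1}$. In particular $S$ is $\SL_2(\reals)$-invariant, so by ergodicity of $\nu$ we have $\nu(S)\in\{0,1\}$, and it suffices to rule out $\nu(S)=1$.

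Assume then $\nu(S)=1$, so that $\Gamma(x)$ is a nontrivial torsion-free discrete subgroup for $\nu$-a.e.\ $x$. Each such $\Gamma(x)$ has a well-defined limit set $\Lambda(x)\subset\reals P^1$, and equivariance gives $\Lambda(hx)=h\Lambda(x)$. The map $x\mapsto\Lambda(x)$ therefore sends $\cM$ into the compact metric space $\mathcal F$ of closed subsets of $\reals P^1$ (with the Hausdorff topology and the continuous $\SL_2(\reals)$-action), and the pushforward $\Lambda_*\nu$ is an $\SL_2(\reals)$-invariant probability measure on $\mathcal F$. The key point is that $\SL_2(\reals)$ admits \emph{no} invariant probability measure on $\reals P^1$ and none on the space of two-element subsets of $\reals P^1$ (the latter being $\SL_2(\reals)/N(A)$, which carries only an infinite invariant measure), while $\SL_2(\reals)$ acts minimally on $\reals P^1$, so that $\emptyset$ and $\reals P^1$ are its only invariant closed subsets. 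Splitting $\cM$ according to the ($\SL_2(\reals)$-invariant) type of $\Lambda(x)$ --- empty, a single point, two points, or an infinite proper subset --- and using ergodicity on each piece, every possibility except $\Lambda(x)=\reals P^1$ produces an $\SL_2(\reals)$-equivariant map to one of these homogeneous spaces, and hence an invariant probability measure that cannot exist. Since $\nu(S)=1$ forces $\Lambda(x)\neq\emptyset$, I conclude that $\Gamma(x)$ is a non-elementary Fuchsian group of the \emph{first kind} ($\Lambda(x)=\reals P^1$) for $\nu$-a.e.\ $x$.

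It remains to convert ``first kind'' into a contradiction with the hypothesis that $\cM$ is not a Teichm\"uller curve. Here I would argue that a first-kind stabilizer $\Gamma(x)$ must in fact be a \emph{lattice} in $\SL_2(\reals)$; then $(M,\omega)$ is a lattice (Veech) surface, its $\SL_2(\reals)$-orbit is closed and carries a finite invariant measure, and by uniqueness of the affine measure on $\cM=\overline{\SL_2(\reals)x}$ this orbit is all of $\cM$ --- i.e.\ $\cM$ is a Teichm\"uller curve, contrary to assumption. Equivalently, one shows $\dim_\reals\cM=3$: since stabilizers are discrete every orbit is $3$-dimensional, so if $\dim_\reals\cM=3$ the generic dense orbit is open, hence (by connectedness of $\cM$) all of $\cM$, hence closed with lattice stabilizer.

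The step I expect to be the main obstacle is precisely this last one: ruling out the possibility that, for a conull set of $x$, $\Gamma(x)$ is an \emph{infinitely generated} Fuchsian group of the first kind that is not a lattice. For finitely generated groups ``first kind $\Rightarrow$ finite covolume'' is classical, but Veech groups need not be finitely generated, so I would control this either through the classification of $\SL_2(\reals)$-orbit closures and affine measures of Eskin--Mirzakhani--Mohammadi~\cite{EMM-Orbit} --- which forces $\overline{\SL_2(\reals)x}=\cM$ for a.e.\ $x$ and pins down the transverse structure --- or through Veech's dichotomy, to show that a conull family of first-kind stabilizers is incompatible with $\dim_\reals\cM>3$. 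This measure-classification input is the analogue, in the present setting, of the use of Chevalley's theorem and the non-normality hypothesis in Lemma~\ref{lem:trivial-stab}.
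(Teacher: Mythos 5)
Your reduction to the case where the stabilizer is a nontrivial discrete torsion-free Fuchsian group for $\nu$-a.e.\ $x$, and the limit-set argument forcing $\Lambda(x)=\reals P^1$ almost everywhere, are fine in outline (the Cantor-set case needs the standard fact that every $\SL_2(\reals)$-invariant probability measure on the space of closed subsets of $\reals P^1$ is supported on $\{\emptyset,\reals P^1\}$, which is more than ``an equivariant map to a homogeneous space,'' but this is known from the IRS literature). The genuine gap is exactly where you suspect it, and it is not repaired by the tools you name. ``First kind $\Rightarrow$ lattice'' fails for infinitely generated Fuchsian groups, and infinitely generated Veech groups of the first kind actually occur for translation surfaces (Hubert--Schmidt; McMullen in genus $2$), so the case you must exclude is nonvacuous. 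Neither proposed fix excludes it: the Eskin--Mirzakhani--Mohammadi orbit-closure theorem gives $\overline{\SL_2(\reals)x}=\cM$ for $\nu$-a.e.\ $x$, but a dense $3$-dimensional orbit $\SL_2(\reals)/\Gamma(x)$ inside a higher-dimensional $\cM$ is perfectly compatible with $\Gamma(x)\neq\{e\}$ (this is precisely the situation $H\cap g\Gamma g^{-1}\neq\{e\}$ on a dense $H$-orbit in the homogeneous setting, which Lemma~\ref{lem:trivial-stab} has to rule out by a separate algebraic argument); and Veech's dichotomy is a statement about lattice Veech groups, so it says nothing about the infinitely generated first-kind case. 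The IRS formalism alone cannot finish either, since nontrivial discrete torsion-free IRSs of $\SL_2(\reals)$ exist in abundance; at some point the affine structure of $\cM$ in period coordinates must enter, and your argument never uses it.

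The paper closes this gap by a direct argument of a different shape: it fixes one non-torsion mapping class $\phi$ at a time, sets $P(\phi)=\{x:\ A\cdot x=\phi(x)\ \text{for some }A\in\SL_2(\reals)\}$, shows each $P(\phi)\cap\widetilde\cM$ is $\nu$-null, and takes the countable union over $\phi$ (torsion classes being excluded by the choice of cover $\tcM_g$). For $\phi$ pseudo-Anosov, $P(\phi)$ is a single unit tangent bundle $T_1\bbh_x$, of dimension $3<\dim\widetilde\cM$. For $\phi$ a multi-twist, any $x\in P(\phi)$ admits a cylinder decomposition in the twisting direction with commensurable moduli, which determines the vertical foliation $F_+$ from the horizontal foliation $F_-$, countably many rational ratios $r_i$, and the area; the train-track lemma identifies the $F_-$-data with half of the period coordinates, so $P(\phi)\cap\reals\widetilde\cM$ is a countable union of sets of dimension $\tfrac12\dim(\reals\widetilde\cM)+2<\dim(\reals\widetilde\cM)$, hence $\bar\nu$-null because $\bar\nu$ is Lebesgue in period coordinates. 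In particular the paper's argument covers the infinitely generated first-kind stabilizers that your approach leaves untouched; as written, your proof is incomplete at its decisive step.
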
 

Recall that the set of self-intersections $\cM'$ of $\cM$ is a proper closed invariant submanifold of $\cM$, hence, $\dim\cM'<\dim \cM$, see~\cite{EMM-Orbit}; in particular, $\nu(\cM')=0$. Therefore, it suffices to prove the proposition for $\nu$-a.e.\ $x\in \cM\setminus\cM'$. 
Let $\widetilde \cM$ denote the lift of $\cM\setminus\cM'$ to $\Omega \mathcal T_g$. 

Fix $\phi \in \widehat \Mod(S_g)$ (that is, $\phi$ is not torsion). Define 
\[
P(\phi)= \Big\{ x \in \widetilde \cM :  
   \quad A \cdot x = \phi(x), \quad \text{for some $A \in \SL_2(\reals)$} \Big\}. 
\]
We will show, for every $\phi \in \widehat \Mod(S)$,  $P(\phi)$ is a $\nu$--measure zero 
subset of $\widetilde \cM$. Note that, by assumption, $\dim(\widetilde \cM) >3$. 

Consider $x \in P(\phi)$ and let $E_x$ be the $\GL^+(2, \R)$ orbit or $x$. 
Then $E_x$ can be considered as (an open subset of) the tangent space of the \Teich disk 
$\bbh_x$ associated to $x$ (the projection of $E_x$ to \Teich space). The restriction 
of \Teich metric to $\bbh_x$ equips $\bbh_x$ with the hyperbolic metric (up to 
a factor 2). We observe  that $\phi$ stabilizes $\bbh_x$ acting on $\bbh_x$ by
an isometry. In fact, we have either (see, for example, \cite[Lemma 5.6]{MT}) 
\begin{itemize} 
\item $\phi$ acts loxodromically on $\bbh_x$ and $\phi$ a pseudo-Anosov element.  
\item $\phi$ acts parabolically on $\bbh_x$ and $\phi$ is a multi-curve. 
\item $\phi$ acts elliptically on $\bbh_x$ and $\phi$ has finite order in $\Mod(S)$. 
\end{itemize} 
Note that the third case is excluded since we are assuming $\phi$ is not torsion. 
We argue each case separately.  

\subsection*{$\phi$ is pseudo-Anosov element} A pseudo-Anosov map $\phi$ 
stabilizes only one \Teich disk, the one where $\partial \bbh_x$ contains $F_+(\phi)$ 
and $F_-(\phi)$; the stable and the unstable foliation associated to $\phi$. 
Therefore, $P(\phi)= T_1 \bbh_x$, the unit tangent bundle over $\bbh_x$. 
Since $\cM$ is a not a \Teich curve, it has a dimension larger
than $3$. Hence $P(\phi)\cap \widetilde \cM$ is a $\nu$-measure zero subset of 
$\widetilde \cM$. 

\subsection*{$\phi$ is a multi-twist} Let $\phi$ be a multi-twist around $\gamma$, 
namely 
\[
\phi = \prod D_{\gamma_i}^{p_i}. 
\]
Let ${\R}P(\phi)$ be the subset of $\cH(\alpha)$ obtained from points in $P(\phi)$ 
after scaling. Then, for any $x \in {\R}P(\phi)$, a measured foliation that is 
topologically equivalent to $\gamma = \{ \gamma_1, \dots, \gamma_k)$ has to appear in 
the boundary of $\bbh_x$. That is, after a rotation, we can assume $x= (F_-, F_+)$ and 
$F_+ = \sum c_k \gamma_k$. 
Furthermore, $x$ has a cylinder decomposition where the modulus of these cylinders are 
rationally multiples of each other (\cite[Lemma 5.7]{MT}). That is, there are $r_i \in {\mathbb Q}$ such that 
\[
r_i \cdot \frac{i(F_-, \gamma_i)}{c_i} = r_j \cdot \frac{i(F_-, \gamma_j)}{c_j}, 
\]
for $1 \leq i, j \leq k$. We also have 
\[
\sum c_i \cdot i(F_-, \gamma_i)=\area(x). 
\]
That is, given $\gamma$, $r_i$, $F_-$ and $\area(x)$, we can calculate the values of 
$c_i$. Hence, $F_+$ and subsequently $x$ are uniquely determined by $\gamma$, $r_i$,
$F_-$ and $\area(x)$. There are countably many choices for the values $r_i$ and 
the multi-curve $\gamma$. We now show that the dimension of the space of possible 
measured foliations $F_-$ is half the dimension of ${\R}\widetilde \cM$
where ${\R}\widetilde \cM$ is the subset of $\cH(\alpha)$ obtained from point 
in $\widetilde \cM$ after scaling. 

For a filling bi-recurrent train-track $\tau$ (see \cite{PH} for definition and discussion)
any admissible weight on $\tau$ defines a measured foliation. We then say this 
measured foliation is carried by $\tau$. 
The complementary regions of a filling train tracks are $n$--gons or punctured $n$--gons. 
A foliation carried by $\tau$ has a singular point
associated to each complementary region of $\tau$. We say $\tau$ is of type
$\alpha=(\alpha_1,\ldots,\alpha_{\noz})$ if $\tau$ has $m$ complementary 
components that are punctured $\alpha_i$--gons, $i = 1 \dots m$. 
We denote the space of admissible weights in $\tau$ by $W(\tau)$.

\begin{lemma} 
For every $x \in \cH(\alpha)$ there are train tracks $\tau_+$ and $\tau_-$
of type $\alpha$ such that a neighborhood of $\cH(\alpha)$ around $x$ is 
homeomorphic to $U \times V$ where $U,V$ are open subsets of $W(\tau_+)$
and $W(\tau_-)$ respectively.  In fact, the real part of the period coordinates for 
$\cH(\alpha)$ give coordinates for $U$ and the imaginary part of the period 
coordinates, give coordinates for $V$.
\end{lemma}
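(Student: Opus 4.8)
The plan is to produce the two train tracks directly from the flat geometry of $x=(M,\omega)$ and to match their weight coordinates with the real and imaginary parts of the period map. Write $\omega=\Re(\omega)+i\,\Im(\omega)$ and attach to $x$ its two transverse measured foliations: the vertical foliation $\mathcal F_+$, whose leaves are the sets $\Re(\omega)=\mathrm{const}$ and whose transverse measure is $|\Re(\omega)|$, and the horizontal foliation $\mathcal F_-$, with transverse measure $|\Im(\omega)|$. Because $\omega$ is a genuine $1$-form, both foliations are orientable (the horizontal and vertical unit vector fields are globally defined away from $\Sigma$), and at a zero of $\omega$ of order $\alpha_i$ the flat metric has a cone point of angle $2\pi(\alpha_i+1)$, so each foliation has a singularity with $2(\alpha_i+1)$ separatrices. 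Thus the singularity data of $\mathcal F_\pm$ is exactly that recorded by $\alpha$, and any train track carrying $\mathcal F_-$ (resp.\ $\mathcal F_+$) will have its complementary components at the points of $\Sigma$, of the type prescribed by $\alpha$.

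I would next produce carrying train tracks. Decomposing $M$ into horizontal bands of leaves of $\mathcal F_-$ (a zippered-rectangle decomposition, cutting along a system of vertical arcs through $\Sigma$) and collapsing each band to its core yields a filling, bi-recurrent train track $\tau_-$ carrying $\mathcal F_-$, whose switches record the adjacencies of the bands and whose complementary components are the once-punctured polygons at the zeros, i.e.\ $\tau_-$ is of type $\alpha$; the same construction with the roles of horizontal and vertical exchanged yields $\tau_+$ carrying $\mathcal F_+$. Orientability of the foliations endows $\tau_\pm$ with a coherent orientation of their branches, so the switch relations become Kirchhoff (conservation) relations; by the standard theory of train tracks \cite{PH} the admissible weights $W(\tau_\pm)$ then furnish coordinate charts, and being carried is an open condition, so every $(M',\omega')$ close to $x$ has its vertical and horizontal foliations carried by the same $\tau_+$ and $\tau_-$, with weights depending continuously on $(M',\omega')$. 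This produces open sets $U\subset W(\tau_+)$ and $V\subset W(\tau_-)$.

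It remains to identify these weights with period coordinates. The weight that $\mathcal F_-$ assigns to a branch $b$ of $\tau_-$ is the transverse measure of the band collapsing to $b$, which — using the coherent transverse orientation coming from $\Im(\omega)$ — equals the integral of $\Im(\omega)$ across a cross-section of $b$, hence a fixed real-linear function of the imaginary periods $\Im\!\int_{\gamma_j}\omega=\Im\Phi_j(M',\omega')$. The Kirchhoff relations at the switches say precisely that this collection of numbers is the restriction of a closed $1$-form, i.e.\ they cut out the image of $H^1(M,\Sigma;\R)$ inside the space of branch-weight vectors, while conversely the periods are recovered from the weights by integration. Therefore $(M',\omega')\mapsto(\Im\Phi_1,\dots,\Im\Phi_k)$ is a homeomorphism of a neighborhood of $x$ onto $V$, so the imaginary part of the period coordinates gives coordinates on $V$. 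The identical argument applied to $\Re(\omega)$, $\mathcal F_+$ and $\tau_+$ shows the real part of the period coordinates gives coordinates on $U$. Since the period map is $\Phi=\Re\Phi+i\,\Im\Phi$ with its real and imaginary parts varying independently, $(M',\omega')\mapsto(\Re\Phi,\Im\Phi)$ is the desired homeomorphism of a neighborhood of $x$ onto $U\times V$.

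The main obstacle is the identification in the last paragraph: one must check that the branch weights are not merely continuous but an \emph{invertible} linear image of the period vector, and — crucially for the dimension count — that the coherent orientation makes the switch relations Kirchhoff relations carrying exactly one global dependency, so that $\dim W(\tau_\pm)=\dim H^1(M,\Sigma;\R)$ rather than one less. This is where orientability of $\omega$ is essential, and where the relative classes (the saddle connections between distinct zeros, which record the displacements of the singularities) must be matched against the combinatorics of the complementary polygons. One also has to verify that a single pair $\tau_\pm$ works uniformly over a whole neighborhood of $x$, which is the openness of the train-track charts rather than a pointwise statement.
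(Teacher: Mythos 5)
Your route is genuinely different from the paper's. The paper takes an $L^\infty$-Delaunay triangulation $\Delta$ of $x$ by saddle connections, chooses a subset $\mathcal B$ of its edges forming a basis of $H_1(M,\Sigma;\Z)$, and uses for $\tau_\pm$ the train tracks dual to $\Delta$; the branch weights are then by construction the numbers $\Re\bigl(\int_e\omega\bigr)$ (resp.\ the imaginary parts) for $e\in\mathcal B$, so the identification of admissible weights with real (resp.\ imaginary) parts of the period coordinates is essentially built into the choice of train track, and uniformity over a neighborhood follows because the saddle connections of $\Delta$ persist under small deformations. You instead build $\tau_\mp$ by collapsing a band decomposition of the horizontal/vertical foliation (the zippered-rectangle picture) and must then prove that the resulting weight vector is an invertible linear image of the period vector. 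Both constructions can be made to work; the paper's choice buys a nearly tautological identification at the cost of invoking the Delaunay machinery, while yours is more classical but pushes the entire content of the lemma into the final linear-algebra step.

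That step is exactly what you flag as ``the main obstacle,'' and as written it is a genuine gap: you assert that the Kirchhoff relations cut out the image of $H^1(M,\Sigma;\R)$ inside the branch-weight space, but you prove neither surjectivity nor the dimension count, and these are precisely what the lemma claims. To close the gap: (i) since the complementary regions of the filling oriented track $\tau_-$ are once-punctured polygons with punctures at $\Sigma$, the surface $M\setminus\Sigma$ deformation retracts onto $\tau_-$, so the space of coherent flows is $H_1(\tau_-;\R)\cong H_1(M\setminus\Sigma;\R)\cong H^1(M,\Sigma;\R)$ by Poincar\'e--Lefschetz duality; this gives both the correct dimension $2g+|\Sigma|-1$ and the fact that the period-to-weight map is the duality isomorphism, hence invertible. (ii) Your claim that each branch weight is a linear function of the periods needs the observation that a cross-section of a band has its endpoints on singular leaves, so it can be slid along horizontal leaves (which are $\Im(\omega)$-null) until its endpoints lie in $\Sigma$, making it a genuine relative cycle. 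With these two points supplied your argument is complete and equivalent in strength to the paper's.
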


\begin{proof}
Let $\Delta$ be a triangulation of $x$ by saddle connections 
(for example, $L^\infty$-Delanay triangulations see \cite[Section 3]{Ian}). 
Pick a subset $\mathcal B$ of the edges of $\Delta$ that give a basis for the 
homology of $x$ relative to the zeros $\Sigma$ of $x$. Then the complex numbers
$\{ \int_\omega x \} _{\omega \in \mathcal B}$ give local coordinates for $H(\alpha)$. 
For every edge $\omega$ of $\Delta$, we have 
\[
\I(\omega, F_-)= \Re \left(\int_\omega x\right).
\]
 In fact, $F_-$ can be constructed, triangle by triangle, from the set of real numbers 
$\{\I(\omega, F_-) \}_{\omega \in \Delta}$.  That is there is a train-track $\tau_-$
dual to the triangulation $\Delta$ (again, see \cite[Section 3]{Ian} for the 
construction of such train-tracks) such that 
$\{ \Re( \int_\omega x ) \} _{\omega \in \mathcal B}$ form an admissible weights
on $\tau_-$. At any point $y \in \cH(\alpha)$ near $x$, the triangulation $\Delta$ can still be 
represented by saddle connections and the set 
$\{ \Re( \int_\omega y ) \} _{\omega \in \mathcal B}$ form an admissible weights
on $\tau_-$ that is associated to the vertical foliation at $y$. That is, 
$\{ \Re( \int_\omega y ) \} _{\omega \in \mathcal B}$, thought of as admissible 
weights on $\tau_-$ give local cooridinates for the set of measured foliation that appear 
as a horizontal foliation of an element of $\cH_1(\alpha)$ near $x$. 
The same also holds for $\tau_+$ and the vertical foliations.  
\end{proof}

Since ${\R}\widetilde \cM$ is an affine sub-manifold 
of $\cH(\alpha)$, it is locally defined by 
a set of affine equations on period coordinates, see e.g. \S\ref{sec: affine inv mnfld} and~\cite{EM-Measure}. 
That is, there are subspaces $U' \subset U$ and $V' \subset V$, 
defined by the same set of affine equations, such that a neighborhood of $x$ in 
${\R}\widetilde \cM$ is naturally homeomorphic to $U' \times V'$. 
In particular, where $U'$ and $V'$ have half the dimension of ${\R}\widetilde \cM$. 

Let $W$ be the intersection of ${\R}P(\phi)$ with this neighborhood. Recall that, 
fixing the multi-curve $\gamma$, rational numbers $r_i$ and the area, 
every point in $W$ is determined, up to rotation, by a point in $U'$. 
Therefore, $W$ is a countable union of set of dimension $\dim(U') +2$. But 
\[
\dim(U') +2= \frac 12 \dim({\R}\widetilde \cM) +2 < \dim({\R}\widetilde \cM),
\] 
where the last inequality follows from the assumption that 
$\dim({\R}\widetilde \cM) >4$. 
That is, ${\R}P(\phi)\cap {\R}\widetilde \cM$ is a countable union of lower 
dimensional subset of ${\R}\widetilde \cM$ and therefore, has 
$\bar\nu$-measure zero, see \S\ref{sec: affine inv mnfld} for the definition of $\bar\nu$. 
Since, $\stab_{\SL_2(\reals)}(x)$ does not change
after scaling, we have, $P(\phi)\cap \widetilde \cM$ has $\nu$--measure zero in 
$\widetilde \cM$.

\subsection{Proof of Theorem~\ref{thm:main}}\label{sec: proof of main}
In this section we prove Theorem~\ref{thm:main}. The proof is based on the following proposition.

\begin{propos}\label{prop: BS conv stratum}
Let $\{E_k: k\in\bbn\}\subset \cH_1(\alpha) \subset \Omega \tcM_{g,n}$ be a sequence of closed $\SL_2(\bbr)$ 
orbits each equipped with the $\SL_2(\bbr)$-invariant probability measure $\mu_k$. Assume further that there exists 
an affine invariant submanifold $(\cM,\nu)\subset \cH_1(\alpha)$ so that 
\be\label{eq: muk to nu}
\mu_k\to \nu\quad\text{as $k\to\infty$}.
\ee
Let $V_k$ denote the Teichm\"uller curve associated to $E_k$
for all $k$. Then $\{V_k\}$ Benjamini-Schramm converges to $\bbh$.
\end{propos}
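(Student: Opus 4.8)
The plan is to apply the criterion of Proposition~\ref{prop:axiomatic-BS} with $H=\SL_2(\reals)$ acting on $X=\cH_1(\alpha)$, the periodic orbits being the closed orbits $E_k=\SL_2(\reals)x_k$, and the target measure $\mu=\nu$. Each $E_k$ qualifies as a periodic orbit: it is closed and $\stab_{\SL_2(\reals)}(x_k)=\Delta_k$ is a lattice (the Veech group), while every point-stabilizer is discrete by Veech's theorem; and the hypothesis~\eqref{eq: muk to nu} is exactly the weak-$*$ convergence required in~\eqref{eq:weak-conv}. Granting the construction of suitable functions $f_R$ below, Proposition~\ref{prop:axiomatic-BS} gives Benjamini-Schramm convergence of $\SL_2(\reals)/\Delta_k$ to $\SL_2(\reals)$. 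To pass to $V_k=\bbh/\Delta_k$ I would use that $\bbh=\SL_2(\reals)/\SO(2)$ with $\SO(2)$ compact: the injectivity radius of a point of $V_k$ differs by a bounded amount from those of its $\SO(2)$-fibre upstairs, and the invariant probability measures correspond under the projection, so convergence transfers to $V_k\to\bbh$. This comparison is routine.

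The essential input is that stabilizers are trivial $\nu$-almost everywhere. First I would note that $\cM$ is not a Teichm\"uller curve. Indeed, the $E_k$ are pairwise distinct closed orbits (this is the case of interest, and is what occurs in the application to Theorem~\ref{thm:main}), and a sequence of distinct closed orbits cannot have its invariant measures converge to the invariant measure of a single closed orbit: by the isolation theorem of Eskin-Mirzakhani-Mohammadi~\cite{EMM-Orbit}, for large $k$ the orbit $E_k$ would be contained in $\cM$ and hence equal to it. With $\cM$ not a Teichm\"uller curve, Proposition~\ref{prop:stabilizer-H-alpha} applies and shows that $\stab_{\SL_2(\reals)}(x)=\{e\}$ for $\nu$-a.e.\ $x$; equivalently the locus $\{x:\stab_{\SL_2(\reals)}(x)\neq\{e\}\}$ is $\nu$-null.

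It remains to produce, for each $R>0$, a continuous $f_R\colon X\to[0,\infty)$ that is positive $\nu$-a.e.\ and whose positivity at $x$ forces $\stab_{\SL_2(\reals)}(x)\cap B^H(e,R)=\{e\}$. Mirroring the homogeneous construction, I would set
\[
T_R=\{x\in X:\ \stab_{\SL_2(\reals)}(x)\cap\overline{B^H(e,R)}\neq\{e\}\}
\]
and define $f_R(x)=\min\bigl(1,\dist_X(x,T_R)\bigr)$ for a fixed metric $\dist_X$ on the stratum. Then $\{f_R=0\}=T_R$, which lies in $\{x:\stab_{\SL_2(\reals)}(x)\neq\{e\}\}$ and is therefore $\nu$-null, giving property~(1); and $f_R(x)>0$ means $x\notin T_R$, giving property~(2).

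The one substantial point, and the place where the argument departs from the homogeneous setting, is the closedness of $T_R$. In the homogeneous case $\stab(g\Gamma)=g\Gamma g^{-1}$ varies continuously without ever jumping, so the analogous set is automatically closed; here the stabilizer is trivial at a generic point and larger at special points. The danger is a sequence $x_n\to x$ carrying nontrivial stabilizer elements $A_n\in\overline{B^H(e,R)}$ that degenerate, $A_n\to e$, while $\stab(x)$ is trivial. I would rule this out by a uniform discreteness statement: on every compact subset of $\cH_1(\alpha)$ the nontrivial elements of the point-stabilizers are bounded away from the identity. Working over the torsion-free cover $\tcM_{g,n}$, the stabilizers are torsion-free and so contain no nontrivial elliptic elements; the hyperbolic elements are affine pseudo-Anosov maps, whose dilatations are bounded below in terms of $g$, and the parabolic elements are governed by cylinder moduli, which are bounded on any compact set where the systole is bounded below. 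Hence $A_n\to e$ is impossible, the limit of the $A_n$ is a nontrivial element of $\stab(x)\cap\overline{B^H(e,R)}$, and $T_R$ is closed. Establishing this uniform discreteness is the main obstacle; once it is available, the verifications are formal and the proposition follows from Proposition~\ref{prop:axiomatic-BS}.
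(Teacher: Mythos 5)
Your proposal is correct in outline and matches the paper's strategy at the top level: reduce to Benjamini--Schramm convergence of $\SL_2(\reals)/\stab_{\SL_2(\reals)}(x_k)$, invoke \cite[Thm.~2.3]{EMM-Orbit} to see that $\cM$ is not a closed orbit so that Proposition~\ref{prop:stabilizer-H-alpha} gives $\nu$-a.e.\ trivial stabilizers, and then feed a family of functions $f_R$ into Proposition~\ref{prop:axiomatic-BS}. Where you genuinely diverge is in the construction of $f_R$. You take $f_R(x)=\min\bigl(1,\dist_X(x,T_R)\bigr)$ with $T_R=\{x:\stab_{\SL_2(\reals)}(x)\cap\overline{B^H(e,R)}\neq\{e\}\}$, which makes continuity and property (2) formal but shifts all the work onto showing $T_R$ is closed; as you correctly identify, this requires a uniform discreteness statement for Veech groups over compact subsets of $\cH_1(\alpha)$ (no elliptics on the torsion-free cover, a lower bound on pseudo-Anosov dilatations in terms of $g$, and a systole bound controlling parabolic shears), a true but nontrivial input that you only sketch. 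The paper avoids this entirely: it sets $f_R(x)=\min\{\dist_{\rm Teich}(x,hx): h\in \bar B^H(e,R)\setminus B^H_{\rm Rie}(e,r_x)\}$, where $r_x$ is half the injectivity radius of the stratum at $x$, so that the excised Riemannian ball (on which $h\mapsto hx$ is automatically injective) plays the role your discreteness bound would play near the identity, and continuity of $f_R$ follows from a routine Hausdorff-convergence argument with no input about Veech groups. Both routes work; yours buys a cleaner-looking $f_R$ at the cost of a genuine auxiliary lemma (which is essentially the ``systole of Teichm\"uller curves'' fact underlying the alternative proof attributed to Leininger and Wright), while the paper's displacement function keeps the argument self-contained. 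Your subsequential argument for why $\cM$ cannot itself be a Teichm\"uller curve, and the $\SO(2)$-fibration comparison passing from $\SL_2(\reals)/\Delta_k$ to $V_k$, agree with what the paper does (the latter is likewise left implicit there).
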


\begin{proof}
The proof if based on Proposition~\ref{prop:axiomatic-BS}. 
Let us write $E_k=\SL_2(\bbr).x_k$. 
We will show that $\SL_2(\bbr)/\stab_{\SL_2(\bbr)}(x_k)$ Benjamini-Schramm converges to $\SL_2(\bbr)$ 
from which the proposition follows. 

First note that $(\cM,\nu)$ is not a closed $\SL_2(\bbr)$ orbits, see~\cite[Thm.~2.3]{EMM-Orbit}. 
Hence, by Proposition~\ref{prop:stabilizer-H-alpha}, we have 
\be\label{eq: use prop stabilizer}
\text{$\stab_{\SL_2(\reals)}(x)=\{e\}$ for $\nu$-a.e.\ $x\in \mathcal M$.}
\ee

In the remaining pats of the argument, we write $H=\SL_2(\bbr)$ and use the notation in \S\ref{sec: BS conv general}. 
In particular, for all $R>0$, let
\[
B^H(e,R)=\{h\in H: \|h-I\|< R\text{ and }\|h^{-1}-I\|< R\}
\] 
where $\|\;\|$ denotes the maximum norm on ${\rm Mat}_2(\bbr)$ with respect to the standard basis.
Similarly, for $r>0$, let $B^H_{\rm Rie}(e,r)$ denote the ball of radius 
$r$ centered at the identity with respect the bi-${\rm SO}(2)$-invariant Riemannian metric on $H$ induced using the Killing form.

For every $x\in \cH_1(\alpha)$, let $r_x$ denote $1/2$ of the injectivity radius of 
$x$ in $\cH_1(\alpha)$ with respect to the Teichm\"uller metric. 
Then $x\mapsto r_x$ is continuous on $\cH_1(\alpha)$; moreover, $h\mapsto hx$ is injective on $B^H_{\rm Rie}(e,r_x)$. 

Let $R>0$ and for every $x\in\mathcal M$, put $B^H_R(x):= \bar B^H(e,R)\setminus B^H_{\rm Rie}(e,r_x)$; 
note that this a compact subset of $\SL_2(\bbr)$. Define $f_R:\mathcal M\to[0,\infty)$ by 
\[
f_R(x)=\min\Bigl\{\dist_{{\rm Teich}}(x,hx): h\in B^H_R(x)\Bigr\}.
\]
Note that $f_R$ is continuous. 
Indeed, let $y_m\to y$, and let $h_m\in B^H_R(y_m)$ be so that $f_R(y_m)=\dist_{{\rm Teich}}(y,h_my_m)$. 
Let $\{f_R(y_{m_i})\}$ be a converging subsequence of $\{f_R(y_m)\}$.  
Since $B^H_R(y_m)$ converges to $B^H_R(y)$ (in Hausdorff metric on compact sets), 
there is a subsequence $h_{m_{i_j}}\to h\in B^H_R(y)$ which implies: $f_R(y)\leq \lim_i f_R(y_{m_i})$. 
In consequence, $f_R(y)\leq\liminf f_R(y_m)$.    
To see the opposite direction, let $h\in B^H_R(y)$ be so that $f_R(y)=\dist_{{\rm Teich}}(y,hy)$. Let $h_m\in B^H_R(y_m)$
be so that $h_m\to h$, then $f_R(y_m)\leq \dist_{{\rm Teich}}(y,h_my_m)$ and for every $\vare>0$ we have 
$\dist_{{\rm Teich}}(y,h_my_m)\leq \dist_{{\rm Teich}}(y,hy)+\vare=f_R(y)+\vare$ so long as $m$ is large enough. 
Hence $\limsup f_R(y_m)\leq f_R(y)+\vare$. The continuity of $f_R$ follows.  

Moreover, in view of~\eqref{eq: use prop stabilizer}, we have $f_R(x)>0$ for $\nu$-a.e.\ $x\in\mathcal M$. Finally, since for every $x$, 
the map $h\mapsto hx$ is injective on $B^H_{\rm Rie}(e,r_x)$, we have $\stab_{\SL_2(\reals)}(x)\cap B^H_{\rm Rie}(e,r_x)=\{e\}$.
Thus if $f_R(x)>0$ for some $x\in\mathcal M$, then $\stab_{\SL_2(\reals)}(x)\cap B^H(e,R)=\{e\}$.

Altogether, we deduce that $f_R$ satisfies the conditions in Proposition~\ref{prop:axiomatic-BS}.
This and~\eqref{eq: muk to nu} imply that Proposition~\ref{prop:axiomatic-BS} applies and yields: 
\begin{quote}
$\SL_2(\bbr)/\stab_{\SL_2(\bbr)}(x_k)$ Benjamini-Schramm converges to $\SL_2(\bbr)$.
\end{quote}
The proof os complete.  
\end{proof}

\begin{proof}[Proof of Theorem~\ref{thm:main}]
Let $\{V_k: k\in\bbn\}\subset \tcM_{g,n}$ be a sequence of Teichm\"uller curves. 
We will show that for every subsequence $\{V_{k_i}\}$, there exists a further subsequence $\{V_{k_{i_j}}\}$ which 
Benjamini-Schramm converges to $\bbh$ the theorem follows from this.

Let $\{V_{k_i}\}$ be a subsequence of $\{V_k\}$. Passing to a further subsequence, which we continue to denote by $\{V_{k_i}\}$, 
we may assume that the corresponding $\SL_2(\bbr)$ orbits $\{E_{k_i}\}$ lie in $\cH_1(\alpha)\subset\Omega\tcM_{g,n}$ 
for a fixed $\alpha$. 

Now by~\cite[Thm.~2.3]{EMM-Orbit}, see also~\cite[Cor.~2.5]{EMM-Orbit}, 
there exists a subsequence $\{E_{k_{i_j}}\}$ of $\{E_{k_i}\}$, and an affine invariant manifold 
$(\mathcal M, \nu)$, so that $\mu_{k_{i_j}}\to \nu$ where $\mu_{k_{i_j}}$ 
denotes the $\SL_2(\bbr)$-invariant measure on $E_{k_{i_j}}$. 

By Proposition~\ref{prop: BS conv stratum}, we have $V_{k_{i_j}}$ Benjamini-Schramm converges to $\bbh$; as we wished to show.
\end{proof}

\bibliographystyle{amsplain}
\bibliography{papers}

\providecommand{\bysame}{\leavevmode\hbox to3em{\hrulefill}\thinspace}
\providecommand{\MR}{\relax\ifhmode\unskip\space\fi MR }
\providecommand{\MRhref}[2]{%
  \href{http://www.ams.org/mathscinet-getitem?mr=#1}{#2}
}
\providecommand{\href}[2]{#2}
\begin{thebibliography}{10}

\bibitem{7-Samurai}
Miklos Abert, Nicolas Bergeron, Ian Biringer, Tsachik Gelander, Nikolay
  Nikolov, Jean Raimbault, and Iddo Samet, \emph{On the growth of
  {$L^2$}-invariants for sequences of lattices in {L}ie groups}, Ann. of Math.
  (2) \textbf{185} (2017), no.~3, 711--790. \MR{3664810}

\bibitem{AG}
Artur Avila and S\'{e}bastien Gou\"{e}zel, \emph{Small eigenvalues of the
  {L}aplacian for algebraic measures in moduli space, and mixing properties of
  the {T}eichm\"{u}ller flow}, Ann. of Math. (2) \textbf{178} (2013), no.~2,
  385--442. \MR{3071503}

\bibitem{AGY}
Artur Avila, S\'{e}bastien Gou\"{e}zel, and Jean-Christophe Yoccoz,
  \emph{Exponential mixing for the {T}eichm\"{u}ller flow}, Publ. Math. Inst.
  Hautes \'{E}tudes Sci. (2006), no.~104, 143--211. \MR{2264836}

\bibitem{BFMS-Complex}
Uri Bader, David Fisher, Nicholas Miller, and Matthew Stover,
  \emph{Arithmeticity, superrigidity and totally geodesic submanifolds of
  complex hyperbolic manifolds}, 2020.

\bibitem{BFMS}
Uri Bader, David Fisher, Nick Miller, and Matthew Stover, \emph{Arithmeticity,
  superrigidity, and totally geodesic submanifolds}, 2019.

\bibitem{Corl}
Kevin Corlette, \emph{Archimedean superrigidity and hyperbolic geometry},
  Annals of Mathematics \textbf{135} (1992), no.~1, 165--182.

\bibitem{EM-Measure}
Alex Eskin and Maryam Mirzakhani, \emph{Invariant and stationary measures for
  the {${\rm SL}(2,\Bbb R)$} action on moduli space}, Publ. Math. Inst. Hautes
  \'{E}tudes Sci. \textbf{127} (2018), 95--324. \MR{3814652}

\bibitem{EMM-Orbit}
Alex Eskin, Maryam Mirzakhani, and Amir Mohammadi, \emph{Isolation,
  equidistribution, and orbit closures for the {${\rm SL}(2,\mathbb R)$} action
  on moduli space}, Ann. of Math. (2) \textbf{182} (2015), no.~2, 673--721.
  \MR{3418528}

\bibitem{Ian}
Ian Frankel, \emph{Cat(-1)-type properties for teichm\"uller space}, 2018.

\bibitem{Grom-Sch}
Mikhail Gromov and Richard Schoen, \emph{Harmonic maps into singular spaces
  andp-adic superrigidity for lattices in groups of rank one}, Publications
  Math{\'e}matiques de l'Institut des Hautes {\'E}tudes Scientifiques
  \textbf{76} (1992), no.~1, 165--246.

\bibitem{LW-Optimal}
Michael Lipnowski and Alex Wright, \emph{Towards optimal spectral gaps in large
  genus}, 2021.

\bibitem{Margulis-Arith-Inv}
G.~A. Margulis, \emph{Arithmeticity of the irreducible lattices in the
  semisimple groups of rank greater than {$1$}}, Invent. Math. \textbf{76}
  (1984), no.~1, 93--120. \MR{739627}

\bibitem{MM}
Gregory Margulis and Amir Mohammadi, \emph{Arithmeticity of hyperbolic
  3-manifolds containing infinitely many totally geodesic surfaces}, 2019.

\bibitem{MT}
Howard Masur and Serge Tabachnikov, \emph{Rational billiards and flat
  structures}, Handbook of dynamical systems, {V}ol. 1{A}, North-Holland,
  Amsterdam, 2002, pp.~1015--1089. \MR{1928530}

\bibitem{McMullen-SL2}
Curtis~T. McMullen, \emph{Dynamics of {${\rm SL}_2(\mathbb R)$} over moduli
  space in genus two}, Ann. of Math. (2) \textbf{165} (2007), no.~2, 397--456.
  \MR{2299738}

\bibitem{MMW-TeichCurves}
Curtis~T. McMullen, Ronen~E. Mukamel, and Alex Wright, \emph{Cubic curves and
  totally geodesic subvarieties of moduli space}, Ann. of Math. (2)
  \textbf{185} (2017), no.~3, 957--990. \MR{3664815}

\bibitem{Mozes-Shah}
Shahar Mozes and Nimish Shah, \emph{On the space of ergodic invariant measures
  of unipotent flows}, Ergodic Theory Dynam. Systems \textbf{15} (1995), no.~1,
  149--159. \MR{1314973}

\bibitem{PH}
R.~C. Penner and J.~L. Harer, \emph{Combinatorics of train tracks}, Annals of
  Mathematics Studies, vol. 125, Princeton University Press, Princeton, NJ,
  1992. \MR{1144770}

\end{thebibliography}

\providecommand{\bysame}{\leavevmode\hbox to3em{\hrulefill}\thinspace}
\providecommand{\MR}{\relax\ifhmode\unskip\space\fi MR }
\providecommand{\MRhref}[2]{%
  \href{http://www.ams.org/mathscinet-getitem?mr=#1}{#2}
}
\providecommand{\href}[2]{#2}

\end{document}